\newtheorem*{thm*}{Theorem}
\newcommand{\ff}{{\mathcal F}}
\newtheorem*{cla*}{Claim}
\newtheorem{thm}{Theorem}
\newtheorem{cla}[thm]{Claim}
\date{}
\DeclareMathOperator{\E}{\mathrm E}
\title{Families of sets with no matchings of sizes 3 and 4}
\author{Peter Frankl, Andrey Kupavskii\footnote{Moscow Institute of Physics and Technology, Ecole Polytechnique F\'ed\'erale de Lausanne; Email: {\tt kupavskii@yandex.ru} \ \ Research supported by the grant RNF~16-11-10014.}}
\date{}
\begin{document}
\maketitle
\begin{abstract} In this paper, we study the following classical question of extremal set theory: what is the maximum size of a family of subsets of $[n]$ such that no $s$ sets from the family are pairwise disjoint? This problem was first posed by Erd\H os and resolved for $n\equiv 0, -1\ (\mathrm{mod }\ s)$ by Kleitman in the 60s. Very little progress was made on the problem until recently. The only result was a very lengthy resolution of the case $s=3,\ n\equiv 1\ (\mathrm{mod }\ 3)$ by Quinn, which was written in his PhD thesis and never published in a refereed journal. In this paper, we give another, much shorter proof of Quinn's result, as well as resolve the case $s=4,\ n\equiv 2\ (\mathrm{mod }\ 4)$. This complements the results in our recent paper, where, in particular, we answered the question in the case $n\equiv -2\ (\mathrm{mod }\ s)$ for $s\ge 5$.

\end{abstract}
\section{Introduction}
Let $[n] := \{1,2,\ldots, n\}$ be the standard $n$-element set and $2^{[n]}$ its power set. A subset $\mathcal F\subset 2^{[n]}$ is called a \textit{family}. For $0\le k\le n$, let ${[n]\choose k}$ denote the family of all $k$-subsets of $[n]$.

For a family $\ff$, let $\nu(\ff)$ denote the maximum number of pairwise disjoint members of $\ff$. Note that $\nu(\ff)\le n$ holds unless $\emptyset \in \ff$. The fundamental parameter $\nu(\ff)$ is called the \textit{independence number} or \textit{matching number}.

Denote the size of the largest family  $\ff\subset 2^{[n]}$ with $\nu(\ff)<s$ by $e(n,s)$.
The following  classical result was obtained by Kleitman.
\vskip+0.3cm
\noindent{\bf Kleitman's Theorem} (\cite{Kl}) Let $s\ge 2, m\ge 1$ be integers.  Then the following  holds.
\begin{align}\label{eq001} \text{For }n=sm-1,\ \text{ we have } \ \ \ e(n,s) &= \sum_{m\le t\le n}{n\choose t},\\
\label{eq002} \text{for }n=sm,\ \text{\ \ \ \ \ \,  we have }\ \ \ e(n,s)&= \frac{s-1}s{n\choose m}+\sum_{m+1\le t\le n}{n\choose t}.
\end{align}
\vskip+0.2cm
The value $e(ms-1,s)$ is attained on the family of all sets of size greater than or equal to $m$. The following matching example for  (\ref{eq002}) was proposed by Kleitman:
$$\big\{K\subset [sm]:|K|\ge m+1\big\}\cup {[sm-1]\choose m}.$$
(Note that ${sm-1\choose m} = \frac {s-1}s{sm\choose m}$.) Let us mention that for $s=2$ both bounds (\ref{eq001}) and (\ref{eq002}) reduce to $e(n,2)= 2^{n-1}$. This easy statement was proved already by Erd\H os, Ko and Rado \cite{EKR}.

Although (\ref{eq001}) and (\ref{eq002}) are beautiful results, for $s\ge 3$ they leave open the cases of $n\not\equiv 0,-1 (\mathrm{mod}\ s).$ For $s=3$, the only remaining case was solved by Quinn \cite{Q}. However, his argument is very lengthy and was never published in a refereed journal. In this paper, we reprove his result, as well as extend it to the case $n=4m+2, s=4$.
\begin{thm}\label{thm1}Fix an integer $m\ge 1$. Then for $s=3,4$ and $n=sm+s-2$ we have
\begin{equation}\label{eq003}e(n,s)= {n-1\choose m-1}+\sum_{m+1\le t\le n}{n\choose t}.\end{equation}\end{thm}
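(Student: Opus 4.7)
I would take
\[
\mathcal G:=\Bigl\{A\subseteq[n]:|A|\ge m+1\Bigr\}\cup\Bigl\{A\in\tbinom{[n]}{m}:1\in A\Bigr\},
\]
of size $\binom{n-1}{m-1}+\sum_{t\ge m+1}\binom{n}{t}$. Among $s$ pairwise disjoint members of $\mathcal G$ at most one can be an $m$-set (all $m$-sets in $\mathcal G$ contain the element $1$), so the total size is at least $m+(s-1)(m+1)=sm+s-1$ in that case and at least $s(m+1)=sm+s$ otherwise, both exceeding $n=sm+s-2$. Hence $\nu(\mathcal G)\le s-1$.

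\paragraph{Upper bound, clean case.} Let $\ff$ attain $e(n,s)$. Applying $(i,j)$-shifts (which preserve both $|\ff|$ and $\nu(\ff)$), I may assume $\ff$ is shifted. Write $\ff_k=\ff\cap\binom{[n]}{k}$. Because $s(m+1)>n$, every potential $s$-matching in $\ff$ must involve at least one set of size $\le m$. The clean case is $\binom{[n]}{m+1}\subseteq\ff$. Here any $A\in\ff$ with $|A|<m$ leaves a complement of size $\ge n-(m-1)=(s-1)(m+1)$, which splits into $s-1$ disjoint $(m+1)$-sets of $\ff$, giving a forbidden $s$-matching; so $\ff_k=\emptyset$ for $k<m$. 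Likewise, two disjoint $m$-sets in $\ff_m$ leave a complement of size $(s-2)(m+1)$, partitioning into $s-2$ disjoint $(m+1)$-sets of $\ff$, again forcing an $s$-matching. Thus $\ff_m$ is intersecting, and since $n=sm+s-2>2m$ the Erd\H os--Ko--Rado theorem gives $|\ff_m|\le\binom{n-1}{m-1}$. This proves \eqref{eq003} in this case.

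\paragraph{General case --- the main obstacle.} When some $(m+1)$-sets are absent from $\ff$, I would run a compensation argument: each disjoint pair of $m$-sets $\{A,A'\}\subseteq\ff_m$ forces deficits in $\ff_{m+1}$. For $s=3$ there is precisely one $(m+1)$-set that must be missing, namely $[n]\setminus(A\cup A')$, and a shift-aided double count relating disjoint pairs in $\ff_m$ to $\binom{n}{m+1}-|\ff_{m+1}|$ ought to close the argument. For $s=4$, each such pair leaves a $(2m+2)$-element complement in which no complementary partition into two $(m+1)$-sets may lie entirely in $\ff_{m+1}$, producing at least $\tfrac12\binom{2m+2}{m+1}$ missing $(m+1)$-sets per pair (with overlap between pairs); moreover, triples of pairwise disjoint $m$-sets---which a shifted $\ff_m$ with $|\ff_m|>\binom{n-1}{m-1}$ will contain---contribute further constraints on single $(m+1)$-sets. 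Sets of size strictly less than $m$ are absorbed by the same mechanism applied one layer higher.

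\paragraph{Where the hard work sits.} The main technical challenge is designing a coherent accounting scheme---likely a weighted double count exploiting the shifted structure, possibly combined with induction on $n$ or $s$---that converts these local deficits into the global inequality $|\ff_m|-\binom{n-1}{m-1}\le\binom{n}{m+1}-|\ff_{m+1}|$ (up to contributions from lower layers). The case $s=4$ is expected to be substantially harder than $s=3$ because two-set and three-set interactions among $m$-sets must be tracked simultaneously, and the right weighting has to be calibrated tightly enough for the bound to be sharp rather than off by a constant factor.
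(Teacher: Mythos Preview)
Your lower bound and the ``clean case'' (when $\binom{[n]}{m+1}\subseteq\ff$) are correct and cleanly argued. However, the general upper bound is not proved: you explicitly describe it as ``the main technical challenge'' and only sketch a plan. As it stands this is a proof outline, not a proof, and the outlined compensation argument faces a genuine obstacle you do not address: many disjoint pairs in $\ff_m$ can force the \emph{same} $(m+1)$-set to be absent, so the map from pairs to missing $(m+1)$-sets is highly non-injective, and there is no evident reason why the resulting deficit at level $m+1$ should dominate the excess $|\ff_m|-\binom{n-1}{m-1}$. Handling the layers below $m$ compounds this difficulty. Shifting alone does not obviously tame the overcount.

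The paper takes an entirely different route. Rather than shifting and double counting, it fixes a cyclic order $\sigma$ of $[n]$ and, for each ``missing'' element $x$, defines an \emph{$x$-family}: a carefully designed collection of arcs of sizes from $1$ up through $m+3$ (for $s=3$) or $m+5$ (for $s=4$), each carrying a specific weight. The total weight on each level is calibrated so that averaging over $\sigma$ recovers $|\ff\cap\binom{[n]}{j}|$ exactly (a Katona circle-method reduction). The core of the argument is then a \emph{discharging scheme}: whenever a small set from $\ff$ appears in an $x$-family, its charge is transferred to specific larger sets in the same $x$-family that are forced to be absent from $\ff$ by the matching constraint. The weights are tuned so that no recipient is overcharged. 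The delicate point for $s=3$ is the case where exactly $m+1$ of the $m$-arcs lie in $\ff$ and only two triples contain a pair of them; this requires a separate case analysis comparing to the \emph{global} weight of certain $(m+1)$-sets rather than their per-$x$-family share.

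One further remark: your expectation that $s=4$ is ``substantially harder'' than $s=3$ is the reverse of what happens. In the paper's framework the two extra elements left over when $n=4m+2$ give more room in the discharging (more central $(m+1)$- and $(m+2)$-sets to absorb charge), and the $s=4$ proof avoids the endgame case analysis needed for $s=3$. The paper in fact remarks that the problem gets \emph{easier} as $s$ grows, which is why the $s\ge 5$ case was settled earlier.
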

The following $s$-matching-free family shows that ``$\ge$'' holds in the equality above for any $s\ge 3$ and $n=sm+s-2$.
$$\big\{L\subset [n]: |L|\ge m+1\big\}\cup\Big\{L\in {[n]\choose m}: 1\in L\Big\}.$$

Theorem~\ref{thm1} bridges the gap that was left between Quinn's result and the result of the paper \cite{FK7}, where we verified the same statement for $n = sm+s-2, s\ge 5$. Contrary to the intuition, the problem gets easier as $s$ becomes larger, and thus the proof for $s=3,4$ is more intricate than that of \cite{FK7}.

The proof is based on a non-trivial averaging technique somewhat in the spirit of Katona's circle method \cite{K}: we choose a certain configuration of sets, show that the intersection of a family satisfying the conditions of Theorem \ref{thm1} with {\it each} such configuration cannot be too large and then average over all such configurations. However, the configuration is quite complicated, the sets in the configuration actually have weights, and, in order to bound the weighted intersection of the family with each configuration,  we use some kind of discharging method.

The method we develop here has proved to be very useful and was already used in several papers. In a recent paper \cite{FK13}, we applied it to completely resolve the following problem studied by Kleitman: what is the maximum cardinality of a family $\ff\subset 2^{[n]}$ that does not contain two disjoint sets $F_1, F_2$, along with their union $F_1\cup F_2$? We refer the reader to the papers \cite{FK7}, \cite{FK13} for a more detailed introduction to the topic and, in particular, to \cite{FK7} for the discussion of the case of general $n,s$. See also \cite{FK14}, where the method we developed was applied.



We note that \eqref{eq001} and \eqref{eq002}, along with more general statements, are proved using a simpler version of our technique in \cite{FK8}.

\section{Preliminaries}
Recall that $\mathcal F$ is called an \textit{up-set} if for any $F\in \mathcal F$ all sets that contain $F$ are also in $\mathcal F$. Since we aim to upper bound the sizes of families $\ff$ with $\nu(\ff)<s$, we may restrict our attention to the families that are up-sets, which we assume for the rest of the paper.\\

We are going to use the following inequality in the proofs:
\begin{equation}\label{eq005}\sum_{j=1}^{k-1}{sm+s-2\choose j}\le \frac 1{s-2}{sm+s-2\choose k}\ \ \ \ \text{for any }k\le m,\ s\ge 3.\end{equation}
Indeed, we have $\frac{{sm+s-2\choose k-j}}{{sm+s-2\choose k-j-1}} = \frac{sm+s-2-k+j+1}{k-j}\ge s-1$ for any $j\ge 0$ and $k\le m$, so by the formula for the summation of a geometric progression, $$\sum_{j=1}^{k-1}{sm+s-2\choose j}\le \frac{\frac 1{s-1}}{1-\frac 1{s-1}}{sm+s-2\choose k} = \frac 1{s-2}{sm+s-2\choose k}.$$
\vskip+0.2cm

\section{Proof of Theorem \ref{thm1} for $s=3$}


We first prove the theorem for $m\ge 3$.
Suppose that $m\ge 3$ and put $n:=3m+1$ for this section. Consider a family $\ff\subset 2^{[n]}$ with $\nu(\ff)<3$. Take an arbitrary cyclic permutation $\sigma$ (assumed in what follows to be the identity permutation for simplicity) and fix  three disjoint $m$-element sets that form arcs in that permutation. This is what we call a \textit{triple}. For $x\in[3m+1]$, the {\it $x$-triple} is the triple of $m$-sets that do not contain the element $x$. It is clear that there is a one-to-one correspondence between the $x$'s and the triples. For each triple, we define three groups of sets of sizes $1,\ldots, m+3$  and assign them weights.  We call this ensemble of sets an {\it $x$-family}. Note that the arithmetic operations in the definitions of the sets are performed modulo $n$.


We define three groups of sets, indexed by $i=0,1,2$. In what follows, we define group $i$. The $i$-th $m$-set $H_i^{(m)}(x)$ in the $x$-family has the form $\{im+x+1,\ldots, im+x+m\}$. The set $H_i^{(m-j)}(x)$ of size $m-j$ for $j<m$ has the form $\{im+j+x+1,\ldots, im+x+m\}$. That is, it consists of the last $m-j$ elements of the $m$-set, if seen in the clockwise order. The sets $\emptyset\subset H_i^{(1)}(x)\subset H_i^{(2)}(x)\subset\cdots\subset H_i^{(m)}(x)$ form a full chain. The definition of the sets of size $\ge m+1$ is less straightforward. Each of the $(\ge m+1)$-sets in the $i$-th group contains the corresponding $m$-set. The $(m+1)$-set
$$H_i^{(m+1)}(x;x):=H_i^{(m)}(x)\cup\{x\}$$ in group $i$ is called \textit{central}. Note that the extra element it has is the element that was left out by the $m$-sets and so $H_i^{(m+1)}(x;x)$ is disjoint of $H_j^{(m)}(x)$ for  $j\ne i$. The two others $$H_i^{(m+1)}(j;x):=H_i^{(m)}(x)\cup\{mj+x+1\}\ \ \ \text{for }j\in\{0,1,2\}- \{i\}$$
are called \textit{lateral} and are disjoint of the corresponding $H_j^{(m-1)}(x)$ and the remaining $m$-set $H_{j_1}^{(m)}(x)$, where $\{j_1,j,i\}=\{0,1,2\}$. For each $j\in\{0,1,2\}- \{i\}$, we define two $(m+2)$-element sets: the \textit{central} set $$H_i^{(m+2)}(x,j;x):=H_i^{(m)}(x)\cup\{mj+x+1,x\}$$ and the \textit{lateral} set $$H_i^{(m+2)}(j,j;x):=H_i^{(m)}(x)\cup\{mj+x+2,mj+x+3\}.$$ The former ones are disjoint of $H^{(m-1)}_j(x)$ and the $m$-set from the remaining $j_1$-th group, where $\{j_1,j,i\}=\{0,1,2\}$, while the latter ones are disjoint of $H^{(m-3)}_j(x)$. Note that $H_{j}^{(m+2)}(x,i;x)$ and $H_{j_1}^{(m+2)}(i,i;x)$ are disjoint for $\{j_1,j, i\} = \{0,1,2\}$.
Finally, we have one $(m+3)$-set in each group: $$H_i^{(m+3)}(x):=H_i^{(m)}(x)\cup\{x,x+1,m+x+1,2m+x+1\}.$$ It is disjoint of the $(m-1)$-sets from the other groups.


Each set in each group gets a weight. We denote by $w_{k}$ the weight of the $k$-element sets, with possible superscripts $l,c$ depending on whether the set is lateral or central, respectively. Put \begin{equation}\label{eq04}\alpha := \frac{(3m+2)m}{4(2m+3)(2m+1)}, \ \ \ \ \alpha':=1-2\alpha.
\end{equation}
Note that $\alpha'>\alpha$. The weights are as follows ($j\ge 0$):
\begin{align}\label{eq05}w_{m-j}&:={n\choose m-j};  \ \ \ \ \ \ w^l_{m+1}:=\alpha{n\choose m+1};\ \ \ \ \ \ w^c_{m+1}:=\alpha'{n\choose m+1};\notag\\
w_{m+3} &:= {n\choose m+3}; \ \ \ \ \ \ w^l_{m+2}:=\frac 1{8}{n\choose m+2};\ \ \ \ \ \   w^c_{m+2}:=\frac{3}{8}{n\choose m+2}.
\end{align}

\begin{figure}[t] \begin{center}  \includegraphics[width=38mm]{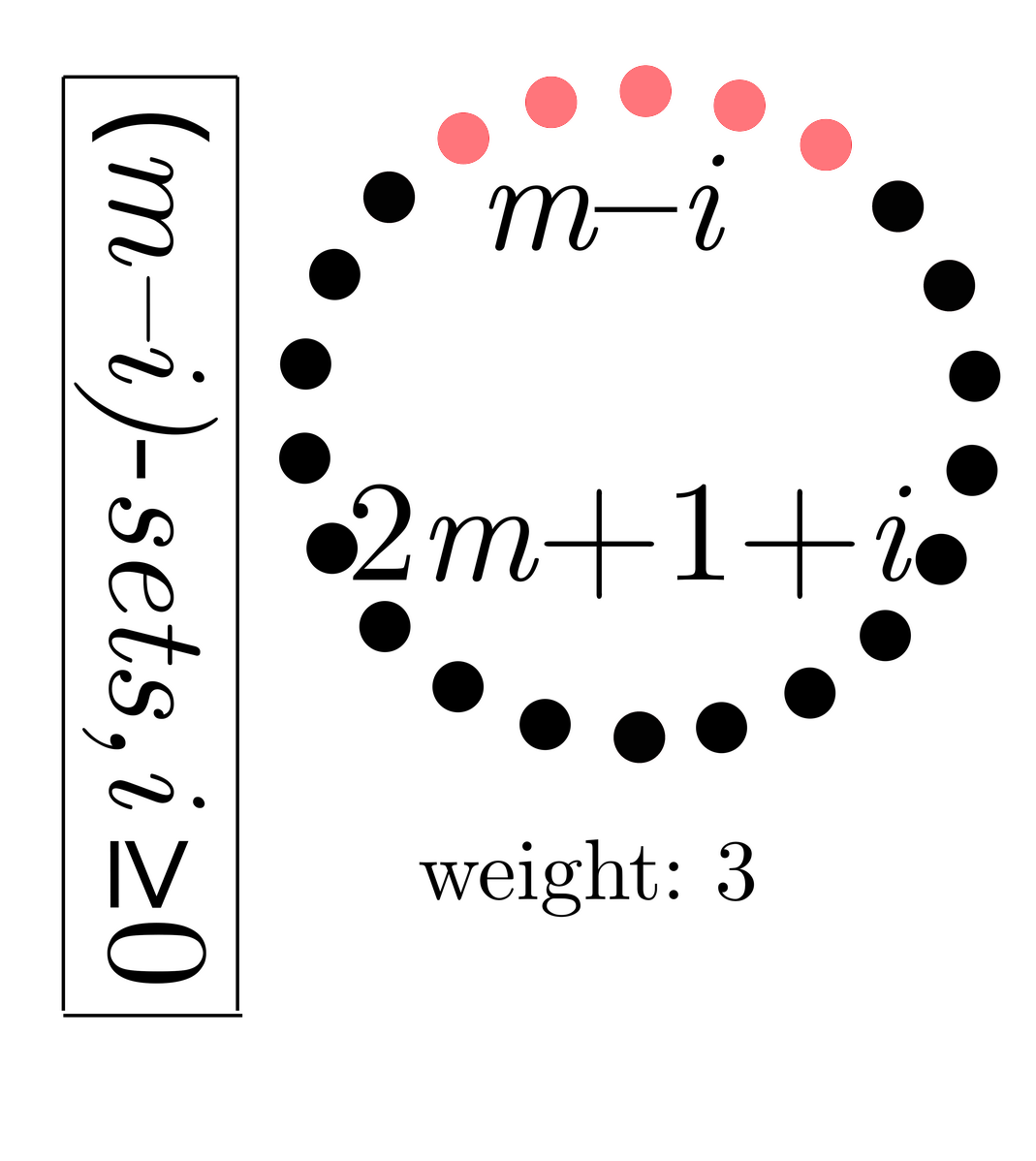}  \includegraphics[width=125mm]{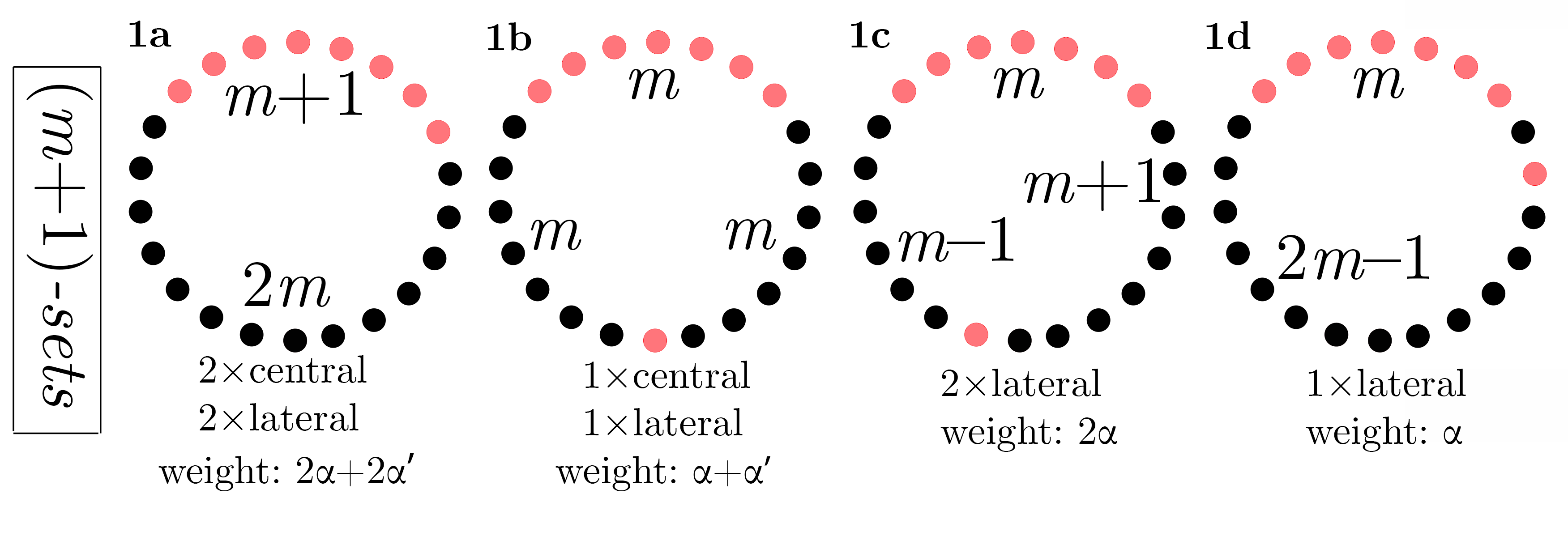}\\ \includegraphics[width=125mm]{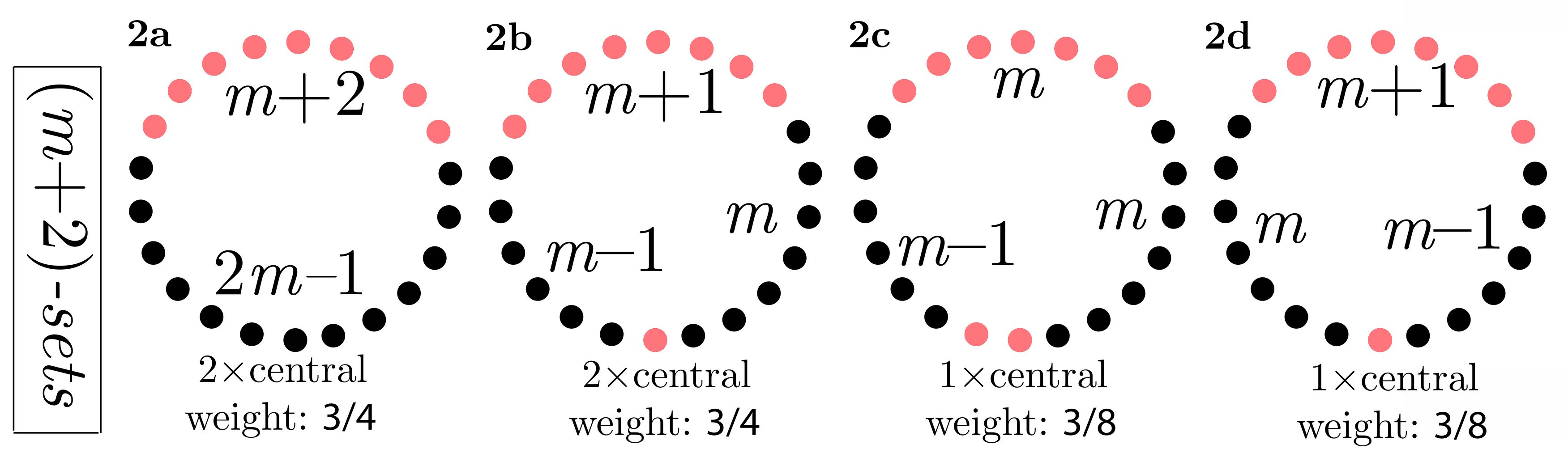}\includegraphics[width=38mm]{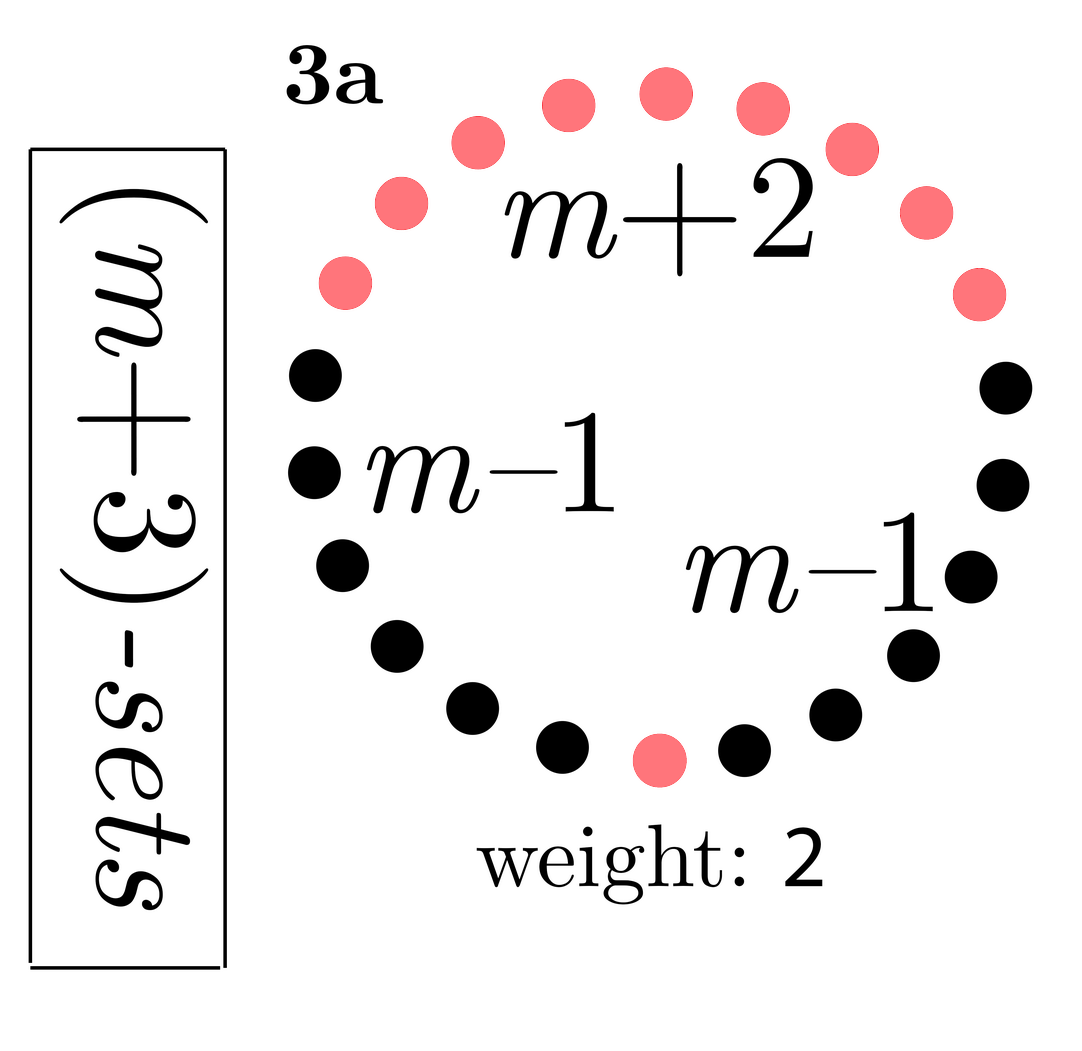}\\ \includegraphics[width=125mm]{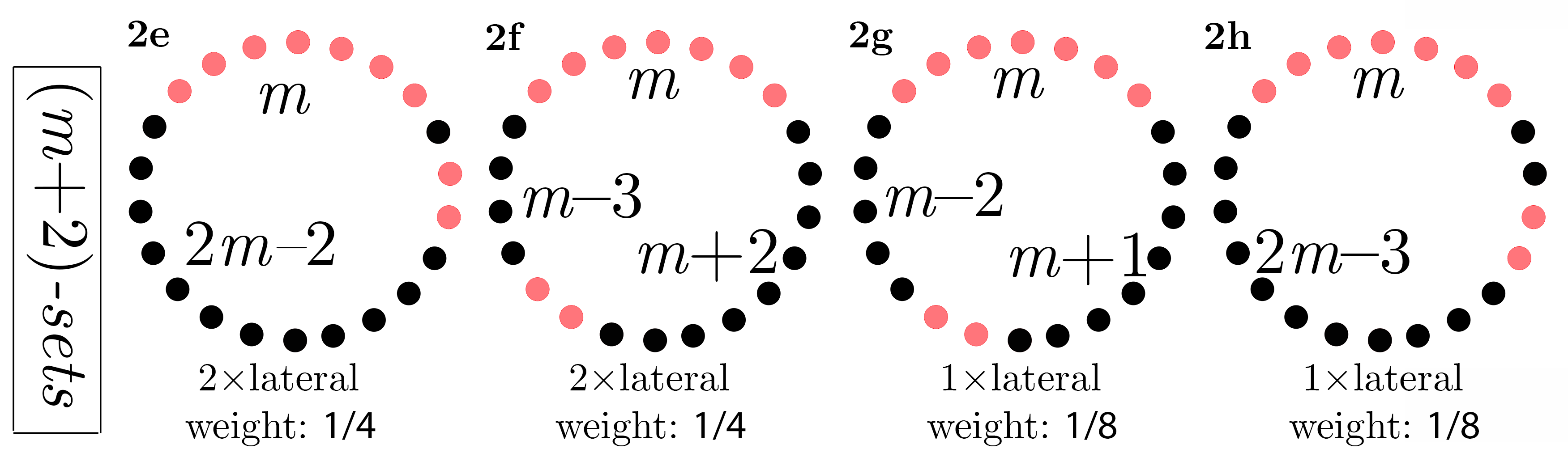}\includegraphics[width=38mm]{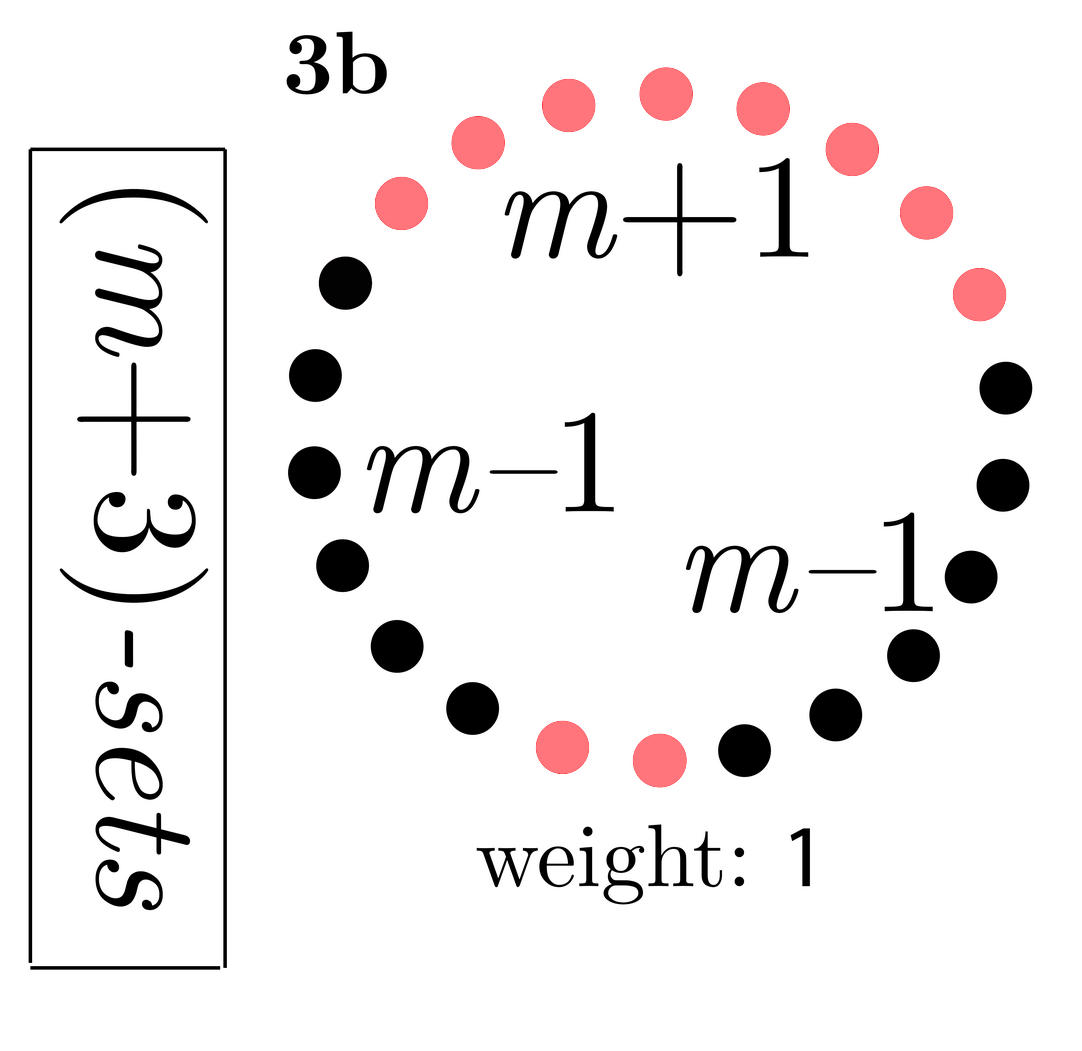}\\ \caption{All types of sets in $\mathcal G(\sigma)$ modulo rotation. The elements of the ground set are represented by dots, and the red dots represent the elements that are contained in the set. Inside the circles we specify the length of any interval of points that simultaneously either belong or do not belong to the set (excluding the intervals of length 1 and 2). The weights of $k$-sets are specified and divided by ${n\choose k}$ for shorthand. For $(m+1)$- and $(m+2)$-sets we also mention, how many times does a given set appear as a central and lateral   set.}\label{fig1}\end{center} \end{figure}

Thus, the total weight of all sets in an $x$-family is $3\sum_{k=1}^{m+3}{n\choose k}$. Each set $F$ that appears in several $x$-families accumulates all the weight $w(F)$ that it was assigned. On Fig.~\ref{fig1}, we listed all the types of sets that are assigned non-zero weights, together with the corresponding weights. We recommend the reader to verify Fig.~\ref{fig1}, since we shall use the information provided on the figure later in the proof! The elements of the ground set are placed on the circle and the sets are represented modulo rotation. We denote by $\mathcal G(\sigma)$ the family of all sets that got  non-zero weight for a given permutation $\sigma$. Note that, for each $j=1,\ldots, m+3$, we have $$\sum_{G\in\mathcal G(\sigma)\cap{[n]\choose j}}w(G) = 3n{n\choose j}.$$

\begin{cla}\label{cla2} To prove the theorem for $s=3$, it is sufficient to show that for any $\sigma$ we have
\begin{equation}\label{eq76}\sum_{F\in \mathcal F\cap \mathcal G(\sigma)}w(F)\le 3n\Big({n-1\choose m-1}+\sum_{k=m+1}^{m+3}{n\choose k}\Big) = 3m{n\choose m}+3n\sum_{k=m+1}^{m+3}{n\choose k}.\end{equation}
\end{cla}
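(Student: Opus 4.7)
The plan is a double-counting/averaging argument: sum the hypothesized per-permutation inequality \eqref{eq76} over all cyclic permutations $\sigma$ and swap the order of summation. First, note that any set of size $\ge m+4$ is assigned weight $0$ in every $\mathcal G(\sigma)$, so we split $\ff = \ff_{\le m+3} \sqcup \ff_{\ge m+4}$ and bound $|\ff_{\ge m+4}| \le \sum_{k=m+4}^n {n\choose k}$ trivially; it remains to control $|\ff_{\le m+3}|$.

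For $1\le k \le m+3$ and $F\in{[n]\choose k}$, let $w^\sigma(F)$ be the total weight that $F$ collects in $\mathcal G(\sigma)$ (with $w^\sigma(F)=0$ if $F\notin\mathcal G(\sigma)$), and set $W_k(F):=\sum_\sigma w^\sigma(F)$, the sum being over all $N=(n-1)!$ cyclic permutations. The construction of $\mathcal G(\sigma)$ and its weights is equivariant under relabeling of $[n]$: for $g\in S_n$ and any cyclic $\sigma$, $g\sigma g^{-1}$ is again a cyclic permutation and $w^{g\sigma g^{-1}}(gF)=w^\sigma(F)$. Since $S_n$ acts transitively on $k$-element subsets, $W_k(F)$ is independent of $F$; call the common value $W_k$. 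Summing the identity $\sum_{G\in \mathcal G(\sigma)\cap{[n]\choose k}} w(G) = 3n{n\choose k}$ over all $\sigma$ and swapping yields $N \cdot 3n{n\choose k} = {n\choose k}\, W_k$, so $W_k = 3nN$ for every $k\in\{1,\ldots,m+3\}$.

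Now summing \eqref{eq76} over all $\sigma$ and swapping sums gives
$$3nN\cdot|\ff_{\le m+3}| \;=\; \sum_\sigma \sum_{F\in\ff\cap\mathcal G(\sigma)} w^\sigma(F) \;\le\; N\Bigl(3m{n\choose m} + 3n\sum_{k=m+1}^{m+3}{n\choose k}\Bigr).$$
Dividing by $3nN$ and using ${n-1\choose m-1}=\tfrac m n{n\choose m}$ yields $|\ff_{\le m+3}| \le {n-1\choose m-1} + \sum_{k=m+1}^{m+3}{n\choose k}$, which combined with the trivial bound on $|\ff_{\ge m+4}|$ recovers precisely the right-hand side of \eqref{eq003}. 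The reduction itself carries no substantial obstacle; the only point requiring care is the symmetry argument that makes $W_k$ independent of $F$ (and hence identifies the per-set weight as the trivial ``$3n$ on average''). The genuine difficulty of the theorem is packed into \eqref{eq76} — the per-permutation inequality to be obtained by the weighted discharging scheme sketched in the introduction — rather than into this averaging step.
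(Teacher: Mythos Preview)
Your proof is correct and is essentially the same averaging argument as the paper's: the paper phrases it as an expectation over a uniformly random permutation $\sigma$ and computes $\Pr[\sigma(F)=G]=1/\binom{n}{j}$ directly, while you sum over cyclic permutations and invoke equivariance under $S_n$ to see that the accumulated weight $W_k(F)$ is constant on each layer. Both routes reduce to the same double count and yield $|\ff_{\le m+3}|\le \binom{n-1}{m-1}+\sum_{k=m+1}^{m+3}\binom{n}{k}$.
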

\begin{proof}
For an event $A$, denote by $I[A]$ its indicator random variable. Denote $id$ the identity permutation. Indeed, if we take a permutation $\sigma$ uniformly at random, then, for each $j=1,\ldots, m+3$, we have $$\E_{\sigma}\bigg[\sum_{F\in \mathcal F\cap \mathcal G(\sigma)\cap {[n]\choose j}}w(F)\bigg] = \sum_{F\in \mathcal F\cap {[n]\choose j}}\E_{\sigma}\bigg[\sum_{G\in \mathcal G(id)\cap {[n]\choose j}}I[\sigma(F)=G]w(G)\bigg]=$$
$$=\sum_{F\in \mathcal F\cap {[n]\choose j}}\sum_{G\in \mathcal G(id)\cap {[n]\choose j}}\Pr[\sigma(F)=G]w(G)=\sum_{F\in \mathcal F\cap {[n]\choose j}}\sum_{G\in \mathcal G(id)\cap {[n]\choose j}}\frac {w(G)}{{n\choose j}} = $$ $$=\sum_{F\in \mathcal F\cap {[n]\choose j}} 3n = 3n\Big|\ff\cap{[n]\choose j}\Big|.$$

Therefore, \eqref{eq76} implies that
$$\sum_{j=1}^{m+3}\Big|\ff\cap{[n]\choose j}\Big| = \frac 1{3n} \E_{\sigma}\bigg[\sum_{F\in \mathcal F\cap \mathcal G(\sigma)}w(F)\bigg]\le {n-1\choose m-1}+\sum_{k=m+1}^{m+3}{n\choose k},$$
which implies the statement of the theorem for $s=3$.\end{proof}

Our strategy to prove (\ref{eq76}) is as follows. For a set $F\in \mathcal G(\sigma)$, we define the charge $c(F)$ to be equal to $w(F)$ if $F\in\mathcal F$, and to be $0$ otherwise. Clearly, $\sum_{F\in \mathcal G(\sigma)}c(F) = \sum_{F\in \mathcal F\cap \mathcal G(\sigma)}w(F)$. If among the $(\le m-1)$-sets in $\mathcal G(\sigma)$ there are no sets from $\mathcal F$, as well as there are at most $m$ $m$-sets, then we are done since each $m$-set appears in exactly three $x$-families. Otherwise, certain $(\ge m+1)$-sets do not appear in $\mathcal F$. Then we transfer (a part of) the charge of the $(\le m)$-sets to the $(\ge m+1)$-sets that have zero charge. We show that the total charge transferred to each $(\ge m+1)$-set is at most its weight. As a result of this procedure, the $(\le m-1)$-sets will have zero total charge, the $m$-sets will have total charge at most $3m{n\choose m}$, and each $(\ge m+1)$-set will have a charge not greater than its own weight. This will obviously conclude the proof of the theorem.

Next, we design a charging scheme that satisfies the above requirements. For the sets of size at most $m-1$, we transfer their charge within each $x$-family, assuring that the charge that we transferred to a bigger set in one $x$-family is smaller than the weight that this bigger set got \textit{from this $x$-family}. See Table~1 for all the triples of pairwise disjoint sets we use in the proof. The reader is welcome to verify that all the triples are actually disjoint. \\
\begin{table}\begin{center}
\begin{tabular}{|c|c|c|}
  \hline
  $H_{j_1}^{(m-3)}(x)$ & $H_{j_2}^{(m+2)}(j_1,j_1;x)$ & $ H_{j_3}^{(m+2)}(x,j_1;x)$ \\ \hline
  $H_{j_1}^{(m-1)}(x)$ & $H_{j_2}^{(m-1)}(x)$ & $H_{j_3}^{(m+3)}(x)$ \\
  \hline
  $H_{j_1}^{(m-1)}(x)$ & $H_{j_2}^{(m)}(x)$ & $ H_{j_3}^{(m+2)}(x,j_1;x)$ \\
  \hline
  $H_{j_1}^{(m-1)}(x)$ & $H_{j_2}^{(m+1)}(x;x)$ & $ H_{j_3}^{(m+1)}(j_1;x)$\\
  \hline
  $H_{j_1}^{(m)}(x)$ & $H_{j_2}^{(m)}(x)$ & $ H_{j_3}^{(m+1)}(x;x)$\\
  \hline
\end{tabular}
\end{center}
\caption{The list of all types of triples of pairwise disjoint sets that we employ in the proof. We assume that $\{j_1,j_2,j_3\}=\{0,1,2\}$. The triples are listed in the order they appear in the proof.}
\end{table}

\textbf{Stage~1. Transferring charge from the $\mathbf{(\le m-3)}$-sets to $\mathbf{(m+2)}$-sets}.\\ Assume that, for some $x\in [n]$  and $i\in\{0,1,2\}$, the set $H^{(m-3)}_i(x)$ is in the family.  Choose $j_1,j_2$ such that $\{j_1,j_2,i\} =\{0,1,2\}$. Then at least one set from each of the two pairs
$\bigl(H_{j_1}^{(m+2)}(i,i;x),$ $ H_{j_2}^{(m+2)}(x,i;x)\bigr),$ $ \bigl(H_{j_2}^{(m+2)}(i,i;x),$ $ H_{j_1}^{(m+2)}(x,i;x)\bigr)$ is missing from $\mathcal F$. We transfer $\frac 12$ of the charge of the subsets $H_i^{(k)}(x)$, $k\le m-3$, which is at most $\frac 12 \sum_{k=1}^{m-3}w_{k}$,  to some two of these missing sets. We again remark that, for each $x$, we transfer only the part of the charge of the sets $H_i^{(k)}(x)$ that they got as the member of the $x$-family.
We have
$$\frac 12 \sum_{k=1}^{m-3}w_{k} = \frac 12 \sum_{k=1}^{m-3}{n\choose k}\overset{(\ref{eq005})}{\le} \frac 12{n\choose m-2} = \frac 12\prod_{j=0}^3\frac {m+2-j}{ 2m+j}{n\choose m+2}< \frac 1{16}{n\choose m+2}.$$
(We could have put $\frac 1{32}$ instead of $\frac 1{16}$, but it does not matter for the calculations.) Since each $(m+2)$-set in the $x$-family may get this charge from each of the two groups to which it does not belong, the total charge transferred in that way is at most $\frac 18{n\choose m+2} \overset{(\ref{eq05})}{=}
 w_{m+2}^l$. The lateral $(m+2)$-sets are not going to get any more charge. As for the central $(m+2)$-sets, we have to make sure that they will get not more than $w^c_{m+2}-w^l_{m+2} \overset{(\ref{eq05})}{=}
 \frac 14{n\choose m+2}$ additional charge.\\

We note that the $(m-2)$-sets will be discharged together with the corresponding $(m-1)$-sets.\\

\textbf{Stage~2. Transferring charge from pairs of $\mathbf{(m-1)}$-sets to $\mathbf{(m+3)}$-sets}.\\ Due to the fact that $\ff$ is an up-set,  from now on we may assume that the charge of any $(\le m-3)$-set is 0. Assume that, for some $x$ and  $i,j_1,j_2$, where $\{j_1,j_2,i\} =\{0,1,2\}$,  both $H_{j_1}^{(m-1)}(x)$ and $H_{j_2}^{(m-1)}(x)$ belong to $\mathcal F$. Then the set $H_i^{(m+3)}(x)$ is not in $\mathcal F$ and, consequently, has zero charge. Transfer the charge of the sets $H_{j_1}^{(k)}(x)$ and $H_{j_2}^{(k)}(x)$, $k=m-2,m-1$, to $H_i^{(m+3)}(x)$. The charge transferred is at most $2w_{m-2}+2w_{m-1}$, which is $$2{n\choose m-2}+2{n\choose m-1} \le \frac 32{n\choose m} = \frac {3(m+1)}{2(2m+1)}{n\choose m+1}\le {n\choose m+1}\le {n\choose m+3} \overset{(\ref{eq05})}{=}
 w_{m+3},$$
since $m\ge 3$. The $(m+3)$-sets are not going to get any more charge.\\

\textbf{Stage~3. Transferring charge from  $\mathbf{(m-1)}$-sets paired with $\mathbf{m}$-sets to central $\mathbf{(m+2)}$-sets}.\\ Assume that, for some $x\in[n]$ and   $i,j_1,j_2$, where $\{j_1,j_2,i\} =\{0,1,2\}$, both $H_{j_1}^{(m-1)}(x)$ and $H_{j_2}^{(m)}(x)$ belong to $\mathcal F$. Then the central $(m+2)$-set $H_i^{(m+2)}(x,j_1;x)$ is not in $\mathcal F$ and, consequently, received at most $w^l_{m+2}$ charge within the $x$-family (because of the charge possibly transferred on Stage~1). Transfer the charge of the sets $H_{j_1}^{(k)}(x)$, $k=m-2,m-1$, to $H_i^{(m+2)}(x,j_1;x)$. The charge transferred is at most $w_{m-2}+w_{m-1}$, which is
$${n\choose m-2}+{n\choose m-1} =\frac{3m+2}{2m+3}{n\choose m-1} = \frac{3m+2}{2m+3}\prod_{j=0}^2\frac{m+j}{2m-j+2}{n\choose m+2}\le \frac 14{n\choose m+2}.$$
The last inequality is valid for any $m\ge 1$ and is easy to verify by a direct calculation. The right hand side is exactly $w_{m+2}^c-w^l_{m+2}$, and so the total charge of the central $(m+2)$-sets is at most $w_{m+2}^c$ in each $x$-family. The $(m+2)$-sets are not going to get any more charge.\\

\textbf{Stage~4. Transferring charge from single $\mathbf{(m-1)}$-sets to $\mathbf{(m+1)}$-sets}.\\ After the above redistribution of charges, for each $x\in [n]$, there are no $(\le m-3)$-sets and at most one $(m-2)$-set and $(m-1)$-set with non-zero charges in the $x$-family,  moreover, we cannot have an $(m-1)$-set and two $m$-sets with non-zero charges in the $x$-family.

Assume that for some $x\in[n]$ and $i,j_1,j_2$, where $\{j_1,j_2,i\} =\{0,1,2\}$, the set $H_{i}^{(m-1)}(x)$ belongs to $\mathcal F$. Then one set from each of the two pairs
$\bigl(H_{j_1}^{(m+1)}(x;x),$ $ H_{j_2}^{(m+1)}(i;x)\bigr)$ and  $ \bigl(H_{j_1}^{(m+1)}(i;x), H_{j_2}^{(m+1)}(x;x)\bigr)$ is missing from $\mathcal F$. We transfer $\frac 12$ of the charge of the subsets $H_i^{(k)}(x)$, $k= m-2,m-1$, which is at most $\frac 12 (w_{m-2}+w_{m-1})$, to each of these missing sets. We have
\begin{equation}\label{eq77}\frac 12 (w_{m-2}+w_{m-1}) = \frac 12{n\choose m-2}+\frac 12{n\choose m-1} = \frac{(3m+2)m}{4(2m+3)(2m+1)}{n\choose m+1} \overset{(\ref{eq05})}{=}
 w_{m+1}^l.\end{equation}
Recall that $\alpha'> \alpha$, and, therefore, $w_{m+1}^c>w_{m+1}^l$, which means that no $(m+1)$-set gets more charge than its weight up to this stage.

\textbf{Stage~5. Transferring charge from pairs of $\mathbf{m}$-sets to central $\mathbf{(m+1)}$-sets}. \\ Denote the number of $m$-sets that have non-zero charge (that is, that are contained in $\mathcal F\cap \mathcal G(\sigma)$) by $q$. If $q\le m$, then we are clearly done since each $m$-set appears in exactly three $x$-families.

Assume that $q>m$. On the one hand, it makes an extra contribution $3(q-m){n\choose m}$ to the left hand side of (\ref{eq76}). On the other hand, the number of triples with two $m$-sets belonging to $\ff\cap \mathcal G(\sigma)$ is non-zero. Indeed, if, for $j\le 2$, we denote by $z_j$ the number of triples with $j$ $m$-sets in the family, then, since each $m$-set participates in three triples, we have $z_1+2z_2 = 3q$. Since $z_0+z_1+z_2=n$, we have $z_2\ge 3q-n$.  Assume that for some $x\in[n]$ and  $i,j_1,j_2$, where $\{j_1,j_2,i\} =\{0,1,2\}$, both $H_{j_1}^{(m)}(x)$ and $H_{j_2}^{(m)}(x)$ belong to $\mathcal F$. Then the central $(m+1)$-set $H_{i}^{(m+1)}(x;x)$ is not in the family. Moreover, no charge was transferred to it from the $x$-family, since we could not have had two $m$-sets and an $(m-1)$-set with non-zero charges at the same time after Stage~3. We transfer $\frac{3(q-m)}{z_2}{n\choose m}$ charge to $ H_{i}^{(m+1)}(x;x)$ from the $m$-sets.

First note that we have transferred $z_2\frac{3(q-m)}{z_2}{n\choose m}$ charge from the $m$-sets to the central $(m+1)$-sets, which results in $m$-sets having total charge of $3m{n\choose m}$. This is precisely what we needed to have, and we are only left to verify that we did not overcharge the central $(m+1)$-sets. Unfortunately, we can run into problems in this situation, so we have to consider two cases. First, assume that $q\ge m+2$. Then the charge on each central set in each $x$-family is at most
\begin{equation}\label{eq78} \frac{3(q-m)}{z_2}{n\choose m}\le \frac{3(q-m)}{3q-n}{n\choose m}\le \frac 65{n\choose m} = \frac {6(m+1)}{5(2m+1)}{n\choose m+1}.\end{equation}
The second inequality holds due to the fact that for $q\ge m+1$ the function $\frac{3(q-m)}{3q-n}$ decreases as $q$ grows. Therefore, if $\alpha'=(1-2\alpha)\ge \frac {6(m+1)}{5(2m+1)}$, then we are done.
Let us verify that this inequality is implied by (\ref{eq04}). Adding $2\alpha$ to the right hand side of the inequality, we get
$$\frac {6(m+1)}{5(2m+1)}+\frac{(3m+2)m}{2(2m+3)(2m+1)} = \frac{12(m+1)(2m+3)+5(3m+2)m}{10(2m+3)(2m+1)} = $$$$=\frac{39m^2+70m+36}{40m^2+80m+30}<1,$$
where the last inequality holds for any $m\ge 1$. Thus, we fulfilled all the requirements on the charging scheme and we are done in the case $q\ge m+2$.

In the case $q=m+1$, however, we run into trouble: the inequality $\alpha'\ge \frac{3(q-m)(m+1)}{z_2(2m+1)} = \frac{3(m+1)}{z_2(2m+1)}$ may not hold. Recall that $z_2\ge 3q-n= 2$. We are still fine if $z_2\ge 3$ since the calculations in \eqref{eq78} still go through in that case.  Thus we only need to examine the case $z_2=2$, which we assume until the end of this section. The equation $z_2=2$ means that there are exactly two triples with two $m$-sets from $\mathcal F$. This is the only part of the proof when we are not going to compare the amount of charge passed to the $(m+1)$-sets  to the portion of its weight inside the $x$-family. Instead, we compare the charge to the full weight of the $(m+1)$-set.

We have two possible configurations with $z_2=2$. One possibility is that we have two $m$-sets from $\ff$ forming an interval of length $2m$ on the circle, and then the two triples contributing to $z_2$ share the same two $m$-sets. In this case, the central $(m+1)$-set that we forbid is the same in both triples, and it is of type 1a (see Fig.~\ref{fig1}). Recall that this set has weight $2(\alpha+\alpha'){n\choose m+1}$. The other possibility is that we have two pairs of $m$-sets, with each pair separated on the two sides by a third $m$-set forming a triple with the pair, and by the element missing from the triple, respectively. In this case, in each of the corresponding two $x$-families we forbid a central $(m+1)$-set of type 1b. Each of these two sets (that are clearly different) has weight $(\alpha+\alpha'){n\choose m+1}$. In either case, we need to transfer $\frac{3(m+1)}{2m+1}{n\choose m+1}$ amount of charge from the $m$-sets to some of the $(m+1)$-sets. We transfer this weight to the central $(m+1)$-set(s), possibly overcharging it.

Assume first that we do not have any $(m-1)$-element sets in $\ff\cap \mathcal G(\sigma)$. Then, in either of the possibilities described above, the central $(m+1)$-sets have zero charge before Stage~5. Therefore, we are good if the weight of these (one or two) $(m+1)$-sets is greater than the amount of charge we transfer to them from the pairs of $m$-sets. Namely, we are good if
\begin{equation}\label{eq11}2(\alpha+\alpha') = 2-2\alpha\ge \frac{3(m+1)}{2m+1}\ \  \ \Leftrightarrow\ \ \ 2\ge \frac{3(m+1)}{2m+1}+\frac{(3m+2)m}{2(2m+3)(2m+1)}.\end{equation}
We have
$$\frac{3(m+1)}{2m+1}+\frac{(3m+2)m}{2(2m+3)(2m+1)} = \frac{15m^2+32m+18}{8m^2+16m+6}<2,$$
where the last inequality holds for $m\ge 3$. Thus, this case is covered.

\begin{figure}[t] \begin{center}  \includegraphics[width=100mm]{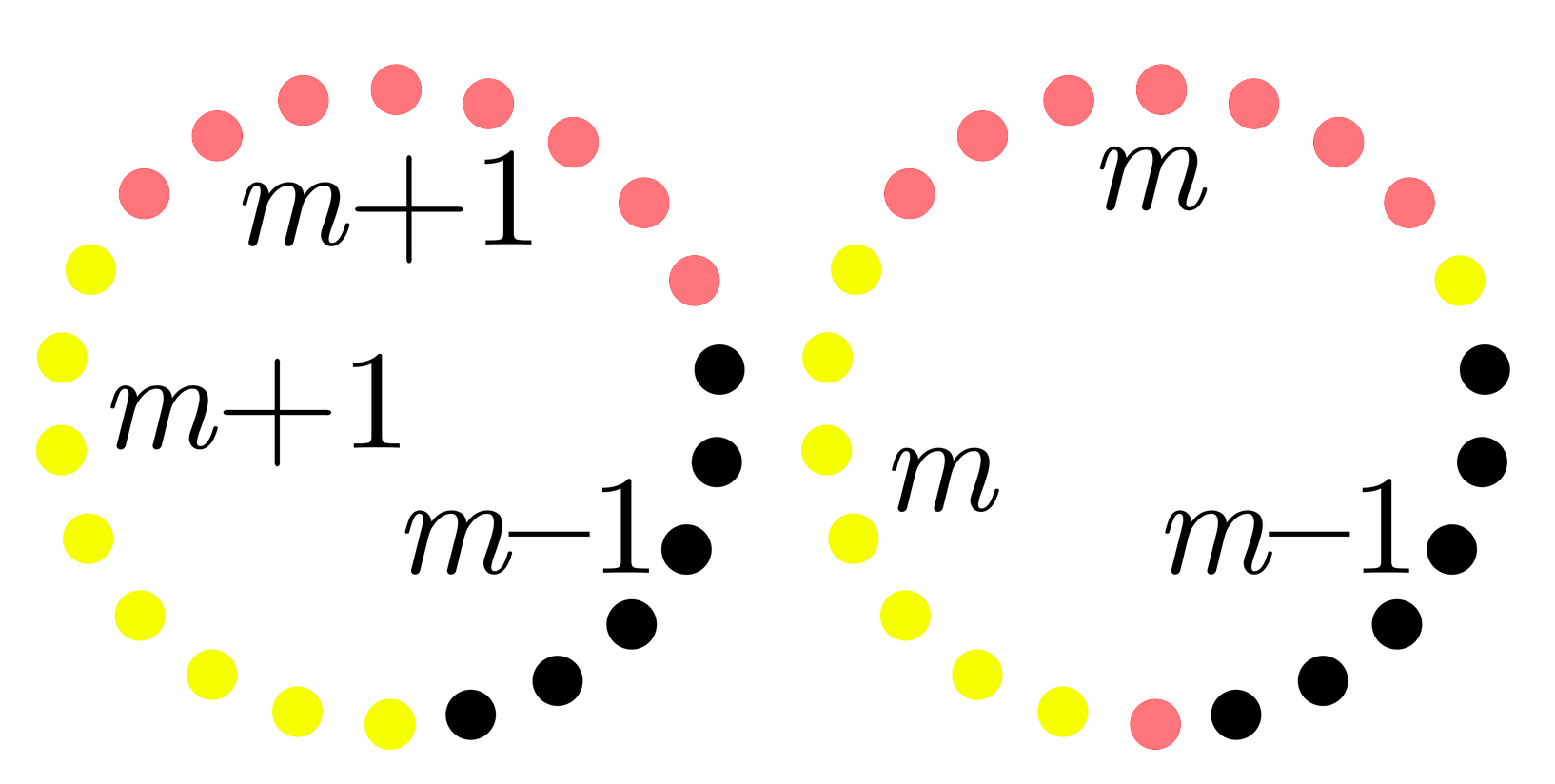} \caption{In each of the two cases the $(m-1)$-set is represented by black points, and two $(m+1)$-sets, which form a matching with the $(m-1)$-set, are represented by red (grey) and yellow (light grey) points.}\label{fig2}\end{center} \end{figure}

Finally, assume that there is at least one $(m-1)$-element set $F\in \ff\cap \mathcal G(\sigma)$. Then, as one can see from Fig.~\ref{fig2}, it forbids at least one $(m+1)$-set of each of the types 1a and 1b to appear. Denote them by $M_1,M_2$. We have seen two paragraphs above that in either case of the arrangement of the $m$-sets we forbid sets of the same type (either one of type 1a, or two of type 1b). Therefore, at least one of $M_1,M_2$ that did not get any charge at Stage~5. We assume that it is a set $M_1$ of type 1b (the other case is easier and is treated similarly). The set $M_1$ appears in two $x$-families and got some
charge only at Stage~4. Moreover, $M_1$ got at most $\alpha{n\choose m+1}$ charge from each of the two $x$-families. Thus, the charge of $M_1$ after all five stages is at most $2\alpha{n\choose m+1}$. This, in turn, means that it has extra capacity of at least $(\alpha'-\alpha){n\choose m+1}$. We redistribute some part of the charge from the two $(m+1)$-sets (that appeared in the $x$-families with the pairs of $m$-sets from $\ff$) to $M_1$. In order to be able to fulfil the requirements on the charging scheme, we need the total capacity of these $(m+1)$-sets to be greater than the charge we transfer. More precisely, it is sufficient if the following inequality holds:
$$(3\alpha'-\alpha)\ge \frac{3(m+1)}{2m+1} \ \ \ \Leftrightarrow \ \ \ 3-7\alpha\ge \frac{3(m+1)}{2m+1}.$$
Note that we replaced the capacity  of the (one or two) missing central $(m+1)$-set(s) by $2\alpha'$  since the $\alpha$-part of the charge may have been already used up by the $(m-1)$-sets at Stage~4. We have verified above (see (\ref{eq11})) that the same inequality holds if one replaces $3-7\alpha$ with $2-2\alpha$. One can easily see (cf. (\ref{eq04})) that $\alpha\le \frac 3 {16}$, so we have $3-7\alpha>2-2\alpha$. The case $q=m+1$ is examined in its entirety, and the proof of the theorem in the case $s=3, m\ge 3$ is complete.

\subsection{The case $m\le 2, s=3$}
In the argument above, we assumed that $m\ge 3$. However, we want to prove the theorem for $m\ge 1$, which leaves us with two cases: $m=1$ and $m=2$. If $m=1$, then we have $n=4$, and we have to show that at least four sets, including the empty set, are missing from a family $\ff\subset 2^{[4]}$ with $\nu(\ff)\le 2$. If there is at most one singleton in $\ff$, then we are done. If there are at least two singletons, say, $\{1\}$ and $\{2\}$, then $\ff\cap 2^{\{3,4\}}$ is empty, which gives $4$ missing sets. The case $m=1$ is covered.\\

If $m=2$ then $n=7$, and we have to show that at least $1+{7\choose 1}+{6\choose 2}=23$ sets are missing from $\ff$. If there is at least one singleton in $\ff$, say $\{1\}$, then $\ff\cap 2^{[2,7]}$ is intersecting, and so, by the Erd\H os-Ko-Rado theorem, a half of the sets are missing from it. This gives $32$ missing sets.

Thus, we may assume that there are no sets of size smaller than $2$ in $\ff$. Now we may slightly modify the proof for the case $m\ge 3$ so that it works for $m\ge 2$. Namely, among the $(m+1)$-sets, we give weights only to the central $(m+1)$-sets (the weights on other layers stay the same). Claim~\ref{cla2} stays true in this case. Since we do not have sets of size smaller than $m$, we can go to Stage~5 of the analysis, where we want to show that with the new weights \eqref{eq78} holds for any $q\ge m+1$ for $m=2, n=7$:
$$\frac{3(q-m)}{3q-n}{n\choose m}\le \frac 32{n\choose m} \le  {n\choose m+1}\ \ \ \ \Leftrightarrow \ \ \ \ \ \ \frac 32{7\choose 2}=\frac{63}2\le {7\choose 3} = 35.$$
The last inequality obviously holds. Thus, for $m=2$ we may terminate the proof right after~\eqref{eq78}. The proof is complete.

\section{Proof of Theorem \ref{thm1} for $s = 4$}
We first prove the theorem for $m\ge 3$.
We put $n := 4m +2$ for some $m\ge 3$ throughout this section. The logic of this proof is very similar to that of the proof in the case $n = 3m+1$, and the proof is in a sense even simpler. We present it somewhat more concisely.

We fix an arbitrary permutation $\sigma$ of the ground set. For simplicity, we assume that $\sigma$ is the identity permutation.
 Quite predictably, define four groups of sets, indexed by $i=0,1,2,3$ and forming an \textit{$x$-family}. The four $m$-sets $H_i^{(m)}(x)$ in an $x$-family are disjoint and form an interval of length $4m$, leaving two contiguous elements $x-1, x$ out (thus, the $x$-family is indexed by the last of the two missing elements in the clockwise order). In what follows, we define the $i$-th group.  The sets in the $i$-th group of size $m-j,\ j=1,\ldots, m$, form a full chain together with $H_i^{(m)}(x)$:
$$H_i^{(m-j)}(x):=\{x+1+j+im,\ldots, x+(i+1)m\}.$$
   We again have both central and lateral $(m+1)$- and $(m+2)$-sets. The $(m+1)$-sets
$$H_i^{(m+1)}(x';x):=H_i^{(m)}(x)\cup\{x'\} \ \ \ \ \text{for }x' = x,x-1$$
in group $i$ are called \textit{central}. Note that the extra element in both sets is left out by the $m$-sets, and so $H_i^{(m+1)}(x';x)$ for both $x'=x-1,x$ is disjoint of the $m$-set from the $j$-th group, $j\ne i$. The three others
$$H_i^{(m+1)}(j;x):=H_i^{(m)}(x)\cup\{jm+x+1\}\ \ \ \ \text{for }j\in \{0,\ldots,3\}-\{i\}$$
 are called \textit{lateral} and are disjoint of the corresponding $H_j^{(m-1)}(x)$ and of the $m$-set in the $j'$-group, $j'\ne i,j$.

For each $j\in \{0,\ldots,3\}-\{i\}$, we define two lateral $(m+2)$-element sets: $$H_i^{(m+2)}(x',j;x):=H_i^{(m+1)}(j;x)\cup\{x'\}\ \ \ \ \text{for }x'=x,x-1,$$
and for each $i$ we define one central set:
$$H_i^{(m+2)}(x-1,x;x):=H_i^{(m)}(x)\cup\{x-1,x\}.$$
The former ones are disjoint of the $(m-1)$-set from group $j$ and the two $m$-sets from the remaining groups, while the latter one is disjoint of the three $m$-sets from the groups $j, j\ne i$. Finally, we have one $(m+5)$-element set for each $i:$
$$H_i^{(m+5)}(x):=H_i^{(m)}(x)\cup\{x-1,x,x+1,m+x+1,2m+x+1,3m+x+1\}.$$

Each set in each group gets a weight.  We denote by $w_{k}$ the weight of the $k$-element sets, with possible superscripts $l,c$ depending on whether the set is lateral or central, respectively. The weights are as follows ($j\ge 0$):
\begin{align}\label{eq07}w_{m-j}&:={n\choose m-j};  \ \ \ \ w^l_{m+1}:=\frac {m}{5(3m+2)}{n\choose m+1};\ \ \ \  w^c_{m+1}:=\frac 12{n\choose m+1}-\frac 32 w_{m+1}^l;\notag\\
w_{m+5} &:= {n\choose m+5}; \ \ \ \ w^l_{m+2}:=\frac 1{22}{n\choose m+2};\ \ \ \ \ \ \ \ \ \ \ \ \ \  w^c_{m+2}:=\frac{8}{11}{n\choose m+2}.
\end{align}

It is easy to check that, for each $i$ and $k\in\{1,\ldots,m+2\}\cup \{m+5\},$ in each group the weight of $k$-element sets sums up to ${n\choose k}$ for fixed $x$, and that $w_{m+1}^c$ is positive.

As before, each set $F$ that appears in some $x$-families accumulates all the weight $w(F)$ that it was assigned. We denote by $\mathcal G(\sigma)$ the family of all sets that got  non-zero weight for a given permutation $\sigma$.
Analogously to Claim~\ref{cla2}, to prove the theorem in this case, it is sufficient to show that for any $\sigma$ we have
\begin{equation}\label{eq08}\sum_{F\in \mathcal F\cap \mathcal G(\sigma)}w(F)\le 4n\Bigl({n-1\choose m-1}+\sum_{k\in\{1,2,5\}}{n\choose m+k}\Bigr) = 4m{n\choose m}+4n\sum_{k\in\{1,2,5\}}{n\choose m+k}.\end{equation}

For a set $F\in \mathcal G(\sigma)$, we define the charge $c(F)$ to be equal to $w(F)$ if $F\in\mathcal F$, and $c(F):=0$ otherwise. Clearly, $\sum_{F\in \mathcal G(\sigma)}c(F) = \sum_{F\in \mathcal F\cap \mathcal G(\sigma)}w(F)$. We again design a scheme for the transfer of (a part of) the charge of the $(\le m)$-sets to the $(\ge m+1)$-sets that have zero charge. We show that the charge transferred to each $(\ge m+1)$-set is at most its weight. As a result of this procedure, the $(\le m-1)$-sets will have zero total charge, the $m$-sets will have total charge $4m{n\choose m}$, and each $(\ge m+1)$-set will have charge not greater than its own weight. This will obviously conclude the proof of the theorem.

Next we design a charging scheme that satisfies the above requirements. For $n=4m+2$ it is sufficient in all cases to redistribute the charge within  each $x$-family, assuring that the charge that we transferred to the larger set in one $x$-family is smaller than the weight that this bigger set got from this $x$-family.

\textbf{Stage~1. Transferring charge from triples of $\mathbf{(m-1)}$-sets to $\mathbf{(m+5)}$-sets}.\\ Assume that, for some $x\in [n]$ and  $i_1,i_2,i_3,j$, where $\{i_1,i_2,i_3,j\} =\{0,1,2,3\}$, the sets $H_{i_u}^{(m-1)}(x)$ for $u\in[3]$, belong to $\mathcal F$. Then $H_{j}^{(m+5)}(x)$ is missing from $\ff$, and, consequently, has zero charge. Transfer all  the charge of the sets $H_{i_u}^{(k)}(x)$, $k\le m-1$ to the missing $(m+5)$-set.

The charge transferred  is at most $3\sum_{k=1}^{m-1}w_{k}$, which is
$$3{n\choose m-1}+3\sum_{k=1}^{m-2}{n\choose k}\overset{(\ref{eq005})}{\le} \frac 92{n\choose m-1} = \frac {9\prod_{p=0}^5 (m+p)}{2\prod_{p=-2}^3(3m+p)}{n\choose m+5}< {n\choose m+5}\overset{(\ref{eq07})}{=} w_{m+5},$$
where the last inequality holds for any $m\ge 2$. Note that we apply \eqref{eq005} for $k=m-1$ in the first inequality above (and in several places below). The $(m+5)$-sets are not going to get any more charge.\\

\textbf{Stage~2. Transferring charge from pairs of $\mathbf{(m-1)}$-sets to lateral $\mathbf{(m+2)}$-sets}.\\ Assume that, for some $x\in[n]$ and $i_1,i_2,j_1,j_2$, where $\{i_1,i_2,j_1,j_2\} =\{0,1,2,3\}$,  both $H_{j_1}^{(m-1)}(x)$ and $H_{j_2}^{(m-1)}(x)$ belong to $\mathcal F$. Then in each of the four pairs $\bigl(H_{i_1}^{(m+2)}(x',j';x),$ $H_{i_2}^{(m+2)}(x'',j'';x)\bigr)$, where $\{j',j''\} =\{j_1,j_2\},\{x',x''\} =\{x-1,x\}$ one of the $(m+2)$-sets is missing from $\ff$, and, consequently, has zero charge. Note that all these $(m+2)$-sets are lateral.
Transfer one quarter of the charge of the sets $H_{j_1}^{(k)}(x)$ and $H_{j_2}^{(k)}(x)$, $k\le m-1$, to each of these missing sets.

The charge transferred to each lateral $(m+2)$-set is at most $\frac 12\sum_{k=1}^{m-1}w_{k}$, which is $$\frac 12 \sum_{k=1}^{m-1}{n\choose k} \overset{(\ref{eq005})}{\le} \frac 34{n\choose m-1} = \frac {3m(m+1)(m+2)}{4(3m+3)(3m+2)(3m+1)}{n\choose m+2}< \frac 1{22}{n\choose m+2} \overset{(\ref{eq07})}{=} w_{m+2}^l,$$
where the last inequality holds for any $m\ge 1$. 
The lateral $(m+2)$-sets are not going to get any more charge.\\

\textbf{Stage~3. Transferring charge from single $\mathbf{(m-1)}$-sets to $\mathbf{(m+1)}$-sets}.\\ After the above redistribution of charges, we have at most one $(m-1)$-set with non-zero charge in each $x$-family.

Assume that for some $x\in[n]$ and  $i,j_1,j_2,j_3$, where $\{j_1,j_2,j_3,i\} =\{0,1,2,3\}$, the set $H_{i}^{(m-1)}(x)$ belongs to $\mathcal F$ and still has non-zero charge. Then one set from each of the six triples
$\bigl(H_{j_1}^{(m+1)}(\pi(i);x),$ $ H_{j_2}^{(m+1)}(\pi(x-1);x), H_{j_3}^{(m+1)}(\pi(x);x)\bigr)$, where $\pi$ is a permutation of the set $\{i,x-1,x\}$, is missing from $\mathcal F$. It is not difficult to see that it means that at least three out of the listed sets are missing from $\ff$. Note that among the possible missing sets there are both central and lateral $(m+1)$-sets.

We transfer $\frac 13$ of the charge of $H_i^{(k)}(x)$, $k \le m-1$, to each of the three missing sets. This is at most $\frac 13 \sum_{k=1}^{m-1}w_{k},$  which is

\begin{equation*}\label{eq77}\frac 13 \sum_{k=1}^{m-1}{n\choose k}\overset{(\ref{eq005})}{\le}
\frac 12{n\choose m-1} =\frac{m(m+1)}{2(3m+3)(3m+2)}{n\choose m+1}=\frac {m}{6(3m+2)}{n\choose m+1}\overset{(\ref{eq07})}{<}w^l_{m+1},\end{equation*}
We are not going to transfer any more weight to the lateral $(m+1)$-sets.\\

\textbf{Stage~4. Transferring charge from pairs and triples of $\mathbf{m}$-sets}.\\ At this stage only the sets of size greater than or equal to $m$ have non-negative charge. Denote the number of $m$-sets that have non-zero charge (that is, that are contained in $\mathcal F\cap \mathcal G(\sigma)$) by $q$. If $q\le m$, then we are clearly done.

Assume that $q>m$. On the one hand, it makes an extra contribution $4(q-m){n\choose m}$ to the left hand side of (\ref{eq08}). On the other hand, the number of quadruples with two or three $m$-sets belonging to $\ff\cap \mathcal G(\sigma)$ is non-zero. Indeed, if we denote by $z_j$ the number of quadruples with $j$ $m$-sets in the family, for $j\le 3$, then we have $z_1+2z_2+3z_3= 4q$. Since $z_0+z_1+z_2+z_3=n$, we have \begin{equation}\label{eq09}
z_2+2z_3\ge 4q-n.\end{equation} We proceed as follows.\\

\textbf{(i) Triples of $\mathbf{m}$-sets.}
Assume that, for some $x\in [n]$ and $j_1,j_2,j_3,i$, where $\{j_1,j_2,j_3,i\} =\{0,1,2,3\}$, the sets $H_{j_u}^{(m)}(x)$ belong to $\ff$ for all $u = 1,2,3$. Then the central $(m+2)$-set $H_{i}^{(m+2)}(x-1,x;x)$ is not in the family $\ff$. Moreover, it has zero charge. We transfer $\frac{8(q-m)}{4q-n}{n\choose m}$ charge to this set. We have $\frac{8(q-m)}{4q-n}\le 4$ for $q\ge m+1$,  since this function for $q\ge m+1$ decreases as $q$ grows. Therefore, we have
\begin{equation}\label{eq0667} \frac{8(q-m)}{4q-n}{n\choose m}\le 4{n\choose m} = 4\frac{(m+1)(m+2)}{(3m+2)(3m+1)}{n\choose m+2}\le \frac 8{11}{n\choose m+2} \overset{(\ref{eq07})}= w_{m+2}^c. \end{equation}
The last inequality holds for $m\ge 3$.\\

\textbf{(ii) Pairs of $\mathbf{m}$-sets.} Assume that for some $x\in[n]$ and $i_1,i_2,j_1,j_2$, where $\{j_1,j_2,i_1,i_2\} =\{0,1,2,3\}$, exactly two $m$-sets $H_{j_1}^{(m)}(x)$ and $H_{j_2}^{(m)}(x)$ from the $x$-family belong to $\mathcal F$. Then in each of the two pairs of central $(m+1)$-sets $\bigl(H_{i_1}^{(m+1)}(x';x),H_{i_2}^{(m+1)}(x'';x)\bigr)$ for $\{x',x''\} = \{x-1,x\}$ one of the sets is not in the family. Moreover, each of them has received at most $w^{l}_{m+1}$ charge within this $x$-family (they could have received charge only in Stage~3). We transfer $\frac{2(q-m)}{4q-n}{n\choose m}$ charge to each of the two missing central sets. We have to verify that the charge transferred is at most $w_{m+1}^c-w_{m+1}^l$. Note that $\frac{2(q-m)}{4q-n}\le 1$. Therefore, it is enough to verify
\begin{equation}\label{eq0666} {n\choose m}\le w_{m+1}^c-w_{m+1}^l\ \ \ \overset{(\ref{eq07})}\Leftrightarrow \ \ \ 2{n\choose m} = \frac{2m+2}{3m+2}{n\choose m+1}\le {n\choose m+1}-5w_{m+1}^l. \end{equation}
The last inequality holds (with equality) since by (\ref{eq07}) we have $5w_{m+1}^l = \frac m{3m+2}{n\choose m+1}$.\\

Now we only have to make sure that we have transferred enough charge. Indeed, we have transferred a total amount of charge equal to
$$\frac{4(q-m)}{4q-n}{n\choose m}z_2+\frac{8(q-m)}{4q-n}{n\choose m}z_3= \frac{4(q-m)}{4q-n}{n\choose m}(z_2+2z_3) \overset{(\ref{eq09})}\ge 4(q-m){n\choose m}.$$
Therefore, the total amount of charge that is left on the $m$-sets is at most $4m{n\choose m}$, moreover, all sets of size not greater than $m-1$ have zero charge, and none of the sets has the charge that is greater than its weight. The inequality (\ref{eq08}) is verified, and the proof of Theorem \ref{thm1} in the case $s=4, m\ge 3$ is complete.

\subsection{The case $m\le 2, s=4$}
In the argument above, we assumed that $m\ge 3$. However, we wang to prove the theorem for $m\ge 1$, which leaves us with two cases: $m=1$ and $m=2$. If $m=1$, then we have $n=6$, and we have to show that at least $6$ sets, including the empty set, are missing from a family $\ff\subset 2^{[6]}$ with $\nu(\ff)\le 3$. If there is at most one singleton in $\ff$, then we are done. If there are at least two singletons, say, $\{1\}$ and $\{2\}$, then $\ff\cap [3,6]$ is intersecting, and, consequently, at least $8$ sets are missing from $\ff$ among the sets from $2^{[3,6]}$. The case $m=1$ is covered.\\

If $m=2$ then $n=10$ and we have to show that at least $1+{10\choose 1}+{9\choose 2}=47$ sets are missing from $\ff$. If there is at least one singleton in $\ff$, say $\{1\}$, then, applying \eqref{eq002} to $\ff\cap 2^{[2,10]}$, we get that at least $1+{9\choose 1}+{9\choose 2}+\frac 13{9\choose 3}$ sets are missing from $\ff$, which is more than 47.

Thus, we may assume that there are no sets of size smaller than $2$ in $\ff$. Now we may slightly modify the proof for the case $m\ge 3$ so that it works for $m\ge 2$. Namely, among the $(m+1)$- and $(m+2)$-sets we give weights only to the central $(m+1)$- and $(m+2)$-sets (each central $(m+1)$-set receives a weight of $\frac 12{n\choose m+1}$, each central $(m+2)$-set receives a weight of ${n\choose m+2}$, and the weights on other layers stay the same). Since $\ff$ contains no sets of size smaller than $m$, we may go to part 4 of the analysis, where we have to verify the following analogues of \eqref{eq0666} and \eqref{eq0667} for $m=2,\ n=10$:
$$\frac{2m+2}{3m+2}{n\choose m+1}\le {n\choose m+1},\ \ \ \ \ \ \ \ \ \ \ \ \ 4\frac{(m+1)(m+2)}{(3m+2)(3m+1)}{n\choose m+2}\le {n\choose m+2}.$$
Both hold for $m=2$. The rest of the proof stays the same. The proof is complete.\\

{\large\textsc{Acknowledgements.\ }} We thank  the anonymous referees for their helpful comments on the presentation of the paper.

\end{document}